\newtheorem{theorem}{Theorem}[section]
\newtheorem{lemma}[theorem]{Lemma}
\newtheorem{cor}[theorem]{Corollary}
\newtheorem{prop}[theorem]{Proposition}
\theoremstyle{definition}
\theoremstyle{remark}
\newtheorem{remark}[theorem]{Remark}
\numberwithin{equation}{section}
\def\bM{\mathbb{M}}
\begin{document}
\baselineskip=15pt

\title{Matrix inequalities and majorizations around Hermite-Hadamard's inequality}

\author{ Jean-Christophe Bourin{\footnote{Funded by the ANR Projet (No.\ ANR-19-CE40-0002) and by the French Investissements
 d'Avenir program, project ISITE-BFC (contract ANR-15-IDEX-03).}} \,and Eun-Young Lee{\footnote{This research was supported by
Basic Science Research Program through the National Research
Foundation of Korea (NRF) funded by the Ministry of
Education (NRF-2018R1D1A3B07043682)}  }   }

\date{ }

\maketitle

\vskip 10pt\noindent
{\small 
{\bf Abstract.} We study the classical Hermite-Hadamard inequality in the matrix setting.
This leads to a number of interesting matrix inequalities such as the Schatten $p$-norm estimates
$$
\left(\|A^q\|_p^p + \|B^q\|_p^p\right)^{1/p} \le \|(xA+(1-x)B))^q\|_p+ \|(1-x)A+xB)^q\|_p
$$
for all  positive (semidefinite) $n\times n$ matrices $A,B$ and $0<q,x<1$. A related   decomposition, with the assumption $X^*X+Y^*Y=XX^*+YY^*=I$, is
$$
(X^*AX+Y^*BY)\oplus (Y^*AY+X^*BX) =\frac{1}{2n}\sum_{k=1}^{2n} U_k (A\oplus B)U_k^*
$$
for some family   of $2n\times 2n$ unitary matrices $U_k$.  This is a majorization which is obtained by using the Hansen-Pedersen trace inequality.

\vskip 5pt\noindent
{\it Keywords.}     Positive definite matrices, block matrices, convex functions, matrix inequalities. 
\vskip 5pt\noindent
{\it 2010 mathematics subject classification.} 15A18, 15A60,  47A30.

}

\section{Elementary scalar inequalities }

Extending basic scalar inequalities, for instance $|a+b|\le |a|+|b|$, to matrices lies at the very heart of matrix analysis. Here, we are interested in the elementary inequality which supports the Hermite-Hadamard inequality. 
This classical  theorem can be stated  as follows:

\vskip 5pt
\begin{prop}\label{propHH1}  Let $f(t)$ be a convex function defined on the interval $[a,b]$. Then,
$$
f\left(\frac{a+b}{2}\right)\ \le \int_0^1 f((1-x)a+xb)\, {\mathrm{d}}x\le \frac{f(a)+f(b)}{2}.
$$
\end{prop}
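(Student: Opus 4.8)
The plan is to treat the two inequalities separately, both flowing directly from the definition of convexity. For the right-hand inequality, I would start from the pointwise convexity estimate $f((1-x)a+xb)\le (1-x)f(a)+xf(b)$, valid for every $x\in[0,1]$. Integrating this over $[0,1]$ and using $\int_0^1 (1-x)\,\mathrm{d}x=\int_0^1 x\,\mathrm{d}x=1/2$ immediately yields $\int_0^1 f((1-x)a+xb)\,\mathrm{d}x\le \frac{f(a)+f(b)}{2}$.

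For the left-hand inequality, the key step is a symmetrization of the integration variable. The change of variable $x\mapsto 1-x$ shows that $\int_0^1 f((1-x)a+xb)\,\mathrm{d}x=\int_0^1 f(xa+(1-x)b)\,\mathrm{d}x$, so the integral equals the average $\frac{1}{2}\int_0^1\bigl[f((1-x)a+xb)+f(xa+(1-x)b)\bigr]\,\mathrm{d}x$. Now I would apply convexity to the two arguments, whose midpoint is $\frac{(1-x)a+xb+xa+(1-x)b}{2}=\frac{a+b}{2}$, giving the pointwise lower bound $\frac{1}{2}\bigl[f((1-x)a+xb)+f(xa+(1-x)b)\bigr]\ge f\bigl(\frac{a+b}{2}\bigr)$. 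Integrating this constant lower bound over $[0,1]$ produces exactly $f\bigl(\frac{a+b}{2}\bigr)$, which is the desired estimate.

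The only technical point to address is the existence of the integrals: a convex function on $[a,b]$ is continuous on the open interval $(a,b)$ and bounded there, hence measurable and integrable, so all the displayed integrals are well defined. I expect no serious obstacle; the whole argument is elementary and rests solely on the two-point form of Jensen's inequality together with the symmetry of the integration variable about $x=1/2$.
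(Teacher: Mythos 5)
Your proof is correct. For the left-hand inequality you symmetrize the integral via $x\mapsto 1-x$ and apply the midpoint convexity bound, which is exactly the route the paper takes through its inequality \eqref{eqbasic}. For the right-hand inequality you diverge slightly: you integrate the single two-point Jensen estimate $f((1-x)a+xb)\le(1-x)f(a)+xf(b)$, which is the most direct argument, whereas the paper deliberately derives it from the sums property \eqref{eqext1}, $f((1-x)a+xb)+f(xa+(1-x)b)\le f(a)+f(b)$, which uses convexity twice. The two arguments cost the same effort here, but the paper's choice is not accidental: \eqref{eqext1} is precisely the scalar inequality whose matrix analogues (Theorem \ref{th-HH1} and its corollaries) are the subject of the rest of the paper, so routing the upper Hermite--Hadamard bound through it is what motivates everything that follows. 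Your remark on integrability is sound; one could add that a convex function on $[a,b]$ is in fact bounded on all of $[a,b]$ (above by $\max\{f(a),f(b)\}$, below by a supporting line at an interior point), and the possible discontinuities at the two endpoints form a null set, so the integral is unproblematic.
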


In spite of its simplicity, the Hermite-Hadamard inequality is a powerful tool for deriving a number of important inequalities; see the nice paper \cite{Ni} and references therein.

The first inequality immediately follows from the convexity assumption
\begin{equation}\label{eqbasic}
f\left(\frac{a+b}{2}\right) \le \frac{f((1-x)a+xb) +f(xa +(1-x)b)}{2} 
\end{equation}
The second inequality is slightly more subtle; it follows from the sums property
\begin{equation}\label{eqext1}
 f((1-x)a+xb) +f(xa +(1-x)b) \le f(a) + f(b)
\end{equation}
which requires   the convexity assumption twice.
This  is the key for Proposition \ref{propHH1} and it has a clear geometric interpretation;  \eqref{eqext1} is equivalent to the increasingness of
$$
\varphi(t) := f(m+t) +f(m-t)
$$
with $m=(a+b)/2$ and $t\in[0,b-m]$. In fact,  if we  assume that $f(t)$ is $C^2$ and observe that for $t\in[0,b-m]$,
$$
\varphi'(t)=f'(m+t)-f'(m-t) =\int_{m-t}^{m+t} f''(s)  \,{\mathrm{d}}s,
$$
we can estimate  $\varphi'(t)$ with $f''(s)\ge 0$.

The extremal property \eqref{eqext1} of  $f(t)$  says that for four points in $[a,b]$,
\begin{equation}\label{eqext2}
p\le s\le t\le q, \ p+q=s+t \Rightarrow f(s)+f(t) \le f(p)+f(q).
\end{equation}

Let us see now what can be said for matrices.
Important matrix versions of  \eqref{eqbasic} are well-known. Let $\bM_n$ denote the space of $n\times n$ matrices and   $\bM_n^{s.a}$ its self-adjoint (Hermitian) part with the usual order $\le $ induced by the positive semidefinite cone $\bM_n^+$. We recall \cite[Corollary 2.2]{BL1}.

\vskip 5pt
\begin{theorem} \label{th-convex} Let  $A,B\in \bM_n^{s.a}$ with spectra in $[a,b]$ and let   $f(t)$ be a convex function on $[a,b]$.  Then, for some unitaries $U,\,V\in\bM_n$,
\begin{equation*}
f\left(\frac{A+B}{2}\right)\le \frac{1}{2}\left\{U\frac{f(A)+f(B)}{2}U^*+V\frac{f(A)+f(B)}{2}V^*\right\}.
\end{equation*}
If furthermore $f(t)$ is monotone, then we can take $U=V$.
\end{theorem}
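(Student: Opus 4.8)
The plan is to prove the matrix version of inequality \eqref{eqbasic} directly, exploiting the spectral theorem together with a unitary-averaging argument. The scalar inequality \eqref{eqbasic} states that $f\bigl(\tfrac{a+b}{2}\bigr)$ is dominated by the average of $f$ evaluated at two symmetric points $(1-x)a+xb$ and $xa+(1-x)b$. For matrices, the natural analogue of these two symmetric convex combinations of $a,b$ will be two symmetric convex combinations of $A,B$, and the midpoint $(a+b)/2$ becomes $(A+B)/2$. The right-hand side of the stated theorem, with its two unitary conjugations averaged, reflects exactly the fact that integrating \eqref{eqbasic} over $x\in[0,1]$ and discretizing forces us to symmetrize $f(A)+f(B)$ by unitaries rather than keeping it on the nose.

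**First I would** recall the operator Jensen inequality in its averaged (Hansen--Pedersen) form: if $f$ is convex on $[a,b]$ and $C_1,\dots,C_m$ are contractions with $\sum_j C_j^*C_j = I$, then $f\bigl(\sum_j C_j^* T_j C_j\bigr) \le \sum_j C_j^* f(T_j) C_j$ for self-adjoint $T_j$ with spectra in $[a,b]$. Taking $T_1 = A$, $T_2 = B$ and suitable contractions realizes $\tfrac{A+B}{2}$ as $C_1^*AC_1 + C_2^*BC_2$ with $C_1^*C_1 + C_2^*C_2 = I$, yielding $f\bigl(\tfrac{A+B}{2}\bigr) \le C_1^* f(A) C_1 + C_2^* f(B) C_2$. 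The difficulty is that this produces a congruence, not a unitary conjugation, and in particular not the symmetric form $\tfrac14\bigl(U(f(A)+f(B))U^* + V(f(A)+f(B))V^*\bigr)$ asserted in the theorem.

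**The main obstacle** will be passing from an arbitrary expression $C_1^* f(A) C_1 + C_2^* f(B) C_2$ to a balanced average of unitary conjugations of the single Hermitian matrix $f(A)+f(B)$. I expect to resolve this via a pinching/dilation argument: the key observation is that any expression $\sum_j C_j^* X_j C_j$ with $\sum_j C_j^*C_j = I$ and the $X_j$ drawn from a fixed list can be written as a conjugation of $\bigoplus_j X_j$ by a unitary on the larger space, and then a projection (compression) onto the first block. Since compressing a Hermitian matrix $Y$ to a subspace can be realized as the average $\tfrac12(Y + WYW^*)$ for a suitable symmetry $W$, two such symmetries will account for the two unitaries $U,V$. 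Concretely, I would embed $f(A)$ and $f(B)$ into $f(A)+f(B)$ via rank considerations, so that $f(A)+f(B)$ becomes the common object being conjugated, and the two averaging unitaries arise from symmetrizing the compression of the block diagonal $f(A)\oplus f(B)$ down to $\bM_n$.

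**For the monotone case** the improvement to $U=V$ should follow because monotonicity of $f$ removes the need for the full symmetrization: one of the two compressions becomes unnecessary since the ordering $f\bigl(\tfrac{A+B}{2}\bigr) \le U \tfrac{f(A)+f(B)}{2} U^*$ can be arranged by a single unitary when $f$ is both convex and monotone, using a monotone spectral comparison that bounds the eigenvalues of $f\bigl(\tfrac{A+B}{2}\bigr)$ directly by those of $\tfrac{f(A)+f(B)}{2}$ in the appropriate (weak majorization) sense. I would cite \cite[Corollary 2.2]{BL1} for the precise unitary-averaging mechanism, as the theorem is quoted from there; the proof I sketch is meant to reconstruct its architecture rather than reprove the cited lemma from scratch.
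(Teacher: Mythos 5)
The paper offers no proof of this statement: it is recalled verbatim from \cite[Corollary 2.2]{BL1}, so the only thing to compare your sketch against is the architecture of that cited result. Your sketch, however, contains a genuine error at its load-bearing step. The operator Jensen inequality $f\bigl(\sum_j C_j^*T_jC_j\bigr)\le\sum_j C_j^*f(T_j)C_j$ of Hansen--Pedersen is valid only for \emph{operator convex} $f$; for a merely convex function it fails, and indeed its validity for all such families of contractions characterizes operator convexity. With the natural choice $C_1=C_2=\tfrac{1}{\sqrt2}I$ your first step would already give $f\bigl(\tfrac{A+B}{2}\bigr)\le\tfrac{f(A)+f(B)}{2}$ with no unitaries at all, which is false for, e.g., $f(t)=t^4$ and suitable noncommuting $A,B$, and which would render the theorem's unitaries pointless. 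The entire reason Theorem \ref{th-convex} carries the average of two unitary conjugations is precisely that the operator Jensen inequality is unavailable for general convex functions; only its \emph{trace} version survives, and that is what the present paper actually uses later, in Lemma \ref{lemma-trace}.

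The actual route in \cite{BL1} is different: one writes $\tfrac{A+B}{2}=J^*(A\oplus B)J$ for the isometry $Jh=\tfrac{1}{\sqrt2}(h,h)$, notes that $J^*f(A\oplus B)J=\tfrac{f(A)+f(B)}{2}$, and invokes a compression lemma asserting that for Hermitian $Z$ and any isometry $J$ one has $f(J^*ZJ)\le\tfrac12\bigl(UJ^*f(Z)JU^*+VJ^*f(Z)JV^*\bigr)$. That lemma is proved first for monotone convex $f$, where a single unitary suffices because one establishes the eigenvalue-by-eigenvalue dominance $\lambda_k^{\downarrow}\bigl(f(J^*ZJ)\bigr)\le\lambda_k^{\downarrow}\bigl(J^*f(Z)J\bigr)$ for every $k$ by minimax arguments; a general convex $f$ is then split at a point where it attains its minimum into two monotone convex pieces, each contributing one unitary. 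Note also that weak majorization, which you invoke for the monotone case, is not enough to conclude $X\le UYU^*$; you need the stronger entrywise comparison of ordered eigenvalues. Your remarks about compressions and symmetrizing by a symmetry $W$ gesture in the right direction, but the difficulty is not rearranging $C_1^*f(A)C_1+C_2^*f(B)C_2$ (which is already $\tfrac{f(A)+f(B)}{2}$); it is commuting $f$ past the compression, and that is where the two unitaries are earned.
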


\vskip 5pt
Theorem \ref{th-convex} is a major improvement of the classical trace inequality of von Neumann (around 1920),
$$
{\mathrm{Tr\,}}f\left(\frac{A+B}{2}\right) \le {\mathrm{Tr\,}}\frac{f(A)+f(B)}{2}
$$
which entails the following trivial extension of Proposition \ref{propHH1}.

\vskip 5pt
\begin{prop}\label{propHH2}  Let $f(t)$ be a convex function defined on the interval $[a,b]$ and let $A,B\in\bM_n^{s.a}$ with spectra in $[a,b]$. Then,
$$
 {\mathrm{Tr\,}}f\left(\frac{A+B}{2}\right)\ \le  {\mathrm{Tr\,}}\int_0^1 f((1-x)A+xB)\, {\mathrm{d}}x\le  {\mathrm{Tr\,}}\frac{f(A)+f(B)}{2}.
$$
\end{prop}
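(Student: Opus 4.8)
The plan is to derive both inequalities at once by pulling everything back to the scalar Hermite--Hadamard inequality of Proposition~\ref{propHH1}, applied to the real-valued trace functional restricted to the segment from $A$ to $B$. Set $g(M):={\mathrm{Tr\,}}f(M)$, defined on the convex set $\mathcal{S}$ of matrices in $\bM_n^{s.a}$ whose spectrum lies in $[a,b]$. First I would record that $\mathcal{S}$ is stable under midpoints, so the von Neumann trace inequality quoted just above reads $g\!\left((M+N)/2\right)\le \left(g(M)+g(N)\right)/2$ for all $M,N\in\mathcal{S}$; that is, $g$ is midpoint-convex on $\mathcal{S}$. Since $f$ is convex on $[a,b]$ it is continuous, hence $M\mapsto g(M)$ is continuous, and a continuous midpoint-convex function is convex. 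Thus $g$ is a genuine convex functional on $\mathcal{S}$.

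Next I would observe that for fixed $A,B\in\mathcal{S}$ the curve $x\mapsto (1-x)A+xB$ is an affine segment contained in $\mathcal{S}$, because $aI\le A,B\le bI$ forces $aI\le (1-x)A+xB\le bI$ for every $x\in[0,1]$. Composing the convex functional $g$ with this affine map produces the scalar function
$$
h(x):={\mathrm{Tr\,}}f\big((1-x)A+xB\big),\qquad x\in[0,1],
$$
which is therefore convex on $[0,1]$.

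Finally I would apply Proposition~\ref{propHH1} to $h$ on $[0,1]$, obtaining
$$
h\!\left(\tfrac12\right)\le \int_0^1 h(x)\,{\mathrm{d}}x\le \frac{h(0)+h(1)}{2}.
$$
Identifying $h(1/2)={\mathrm{Tr\,}}f\!\left((A+B)/2\right)$, $h(0)={\mathrm{Tr\,}}f(A)$ and $h(1)={\mathrm{Tr\,}}f(B)$, and using linearity of the trace to exchange it with the integral, so that $\int_0^1 h(x)\,{\mathrm{d}}x={\mathrm{Tr\,}}\int_0^1 f((1-x)A+xB)\,{\mathrm{d}}x$, reproduces the asserted chain of inequalities verbatim. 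Note that in this approach the symmetry $x\leftrightarrow 1-x$ used in the scalar proof is not needed: both bounds fall out of the one-variable statement read along a single line segment, which is exactly why the proposition is "trivial" once Theorem's midpoint inequality is granted.

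The only point needing a genuine argument is the passage from the midpoint von Neumann inequality to full convexity of $g$, and this is the step I would expect to be the main (minor) obstacle. If one prefers to avoid the continuity upgrade, convexity of $g$ can be proved directly: diagonalize $C:=(1-x)M+xN=\sum_i c_i\,e_i e_i^*$, write each eigenvalue as $c_i=(1-x)\ps{e_i}{Me_i}+x\ps{e_i}{Ne_i}$, apply scalar convexity of $f$ entrywise, and then Jensen's inequality in the form $f(\ps{e_i}{Me_i})\le \ps{e_i}{f(M)\,e_i}$ to each eigenvector; summing over $i$ gives $g(C)\le (1-x)g(M)+xg(N)$ without any appeal to continuity.
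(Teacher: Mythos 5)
Your proof is correct and is precisely the argument the paper intends: the von Neumann trace inequality makes $x\mapsto {\mathrm{Tr}\,}f((1-x)A+xB)$ convex, and the scalar Hermite--Hadamard inequality of Proposition \ref{propHH1} applied to this one-variable function gives both bounds at once (the paper simply labels this ``trivial'' and omits the details). The only slip is the assertion that convexity on $[a,b]$ implies continuity --- a convex function on a closed interval may jump up at the endpoints --- but your direct Jensen-type verification of the convexity of $M\mapsto {\mathrm{Tr}\,}f(M)$ sidesteps that issue entirely, so the argument stands.
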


What about matrix versions of the equivalent scalar inequalities \eqref{eqext1} and \eqref{eqext2} ? There is no hope for \eqref{eqext2} : in general the trace inequality
$$
{\mathrm{Tr}\,} f(P) + {\mathrm{Tr}\,} f(Q)  \le {\mathrm{Tr}\,} f(S) + {\mathrm{Tr\,}} f(T) 
$$
does not hold for all Hermitian matrices $P,Q,S,T$ with spectra in $[a,b]$ and such that
$$
P\le S\le T \le Q, \quad P+Q=S+T.
$$

In the matrix setting \eqref{eqext1} and \eqref{eqext2} are not equivalent. This paper aims to establish two  matrix versions of the extremal inequality \eqref{eqext1}.  Doing so, we will obtain several new matrix inequalities. 

 For an operator convex functions $h(t)$ on $[a,b]$, and $A,B\in\bM_n^{s.a}$ with spectra in this interval, the matrix version of \eqref{eqext1} (as well as Proposition \ref{propHH1}) obvioulsy holds,
\begin{equation}\label{eqext3}
 h((1-x)A+xB) +h(xA +(1-x)B) \le h(A) + h(B)
\end{equation}
for any $0<x<1$. Our results hold for much more general convex/concave functions and have  applications to eigenvalue inequalities that cannot be derived from
\eqref{eqext3}, even in the case of the simplest operator convex/concave function $h(t)=t$. 

 We often use a crucial assumption: our functions are defined on the positive half-line and we deal with positive semidefinite matrices. In the matrix setting, the interval of definition of a function  may be quite important; for instance the class of operator monotone functions on the whole real line reduces to affine functions.

\section{The sums property for matrices}

 For  concave functions, the inequality \eqref{eqext1} is reversed. Here is the matrix version.  An isometry 
$U\in\bM_{2n,n}$ means a $2n\times n$ matrix such that $U^*U=I$, the identity of $\bM_n$.

\vskip 5pt
\begin{theorem}\label{th-HH1}  Let $A, B\in\bM_{n}^+$, let $0<x<1$, and let $f(t)$ be a monotone concave function  on $[0,\infty)$ with $f(0)\ge 0$. Then, for some isometry matrices $U,V\in\bM_{2n,n}$,
$$
f(A)\oplus f(B) \le Uf((1-x)A+xB)U^* + Vf(xA+(1-x)B)V^*.
$$
\end{theorem}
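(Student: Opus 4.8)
The plan is to rotate the pair $(A,B)$ into the pair $(C,D):=\bigl((1-x)A+xB,\ xA+(1-x)B\bigr)$ by a single unitary and then reduce the assertion to a statement about a positive block matrix whose diagonal blocks are $C$ and $D$. Concretely, with $\alpha=\sqrt{1-x}$ and $\beta=\sqrt{x}$, set
$$
W=\begin{pmatrix} \alpha I & \beta I\\ -\beta I & \alpha I\end{pmatrix}\in\bM_{2n},
$$
a unitary, and compute that $H:=W(A\oplus B)W^*$ is a positive block matrix whose diagonal blocks are exactly $C$ and $D$, namely $H=\begin{pmatrix} C & Y\\ Y^* & D\end{pmatrix}$ with $Y=\sqrt{x(1-x)}\,(B-A)$. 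Since $f(A)\oplus f(B)=f(A\oplus B)=W^*f(H)W$, the theorem will follow, after conjugating by $W$ and relabelling $W^*U_0,\,W^*V_0$ as the desired isometries, once I establish the block lemma: for every positive block matrix $H$ with diagonal blocks $C,D\in\bM_n^+$ there are isometries $U_0,V_0\in\bM_{2n,n}$ with $f(H)\le U_0f(C)U_0^*+V_0f(D)V_0^*$.

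First I would treat the normalised case $f(0)=0$. Here the engine is the known decomposition of positive block matrices (Bourin): there exist unitaries $P,Q\in\bM_{2n}$ with $H=P(C\oplus 0)P^*+Q(0\oplus D)Q^*=:H_1+H_2$, so $H_1,H_2\ge 0$ are unitarily equivalent to $C\oplus 0$ and $0\oplus D$. Applying the subadditivity of a monotone concave function with $f(0)=0$, which gives $f(H_1+H_2)\le U'f(H_1)U'^*+V'f(H_2)V'^*$ for suitable unitaries $U',V'$, and using $f(H_1)=P(f(C)\oplus 0)P^*$ and $f(H_2)=Q(0\oplus f(D))Q^*$, the factor $\oplus\,0$ lets me rewrite $U'f(H_1)U'^*$ as $U_0f(C)U_0^*$ for the isometry $U_0\in\bM_{2n,n}$ given by the first $n$ columns of $U'P$, and likewise the $D$-term as $V_0f(D)V_0^*$. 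This yields the block lemma, hence the theorem, when $f(0)=0$.

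The main obstacle is the passage to a general monotone concave $f$ with $f(0)\ge 0$. The splitting $f=f(0)+g$ with $g(0)=0$ does not close the argument: applying the $g$-result produces the desired sum only up to a spurious positive term $f(0)(SS^*+TT^*)$ coming from the kernels of $H_1,H_2$, and this term cannot be discarded, since absorbing it would force $U_0,V_0$ to have complementary ranges — equivalently the single-unitary bound $f(A\oplus B)\le Zf(C\oplus D)Z^*$ — which is already false for scalars, because the largest eigenvalue $f(\max\mathrm{spec})$ of the left-hand side exceeds that of the right. Monotonicity must therefore be used head-on. The gain is that, since $U_0,V_0$ are allowed to be two \emph{independent} isometries, the existence of the bound reduces to an eigenvalue comparison between $f(A)\oplus f(B)$ and the best achievable spectrum of $U_0f(C)U_0^*+V_0f(D)V_0^*$: monotonicity controls the small eigenvalues (so that $f(\min\mathrm{spec})\le f(C),f(D)$ places the least eigenvalues of the left side below those of $f(C)$ and $f(D)$), while the scalar sums property \eqref{eqext1}, read through Ky Fan's maximum principle, controls the partial sums of the large eigenvalues. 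Turning this spectral picture into an explicit construction of $U_0,V_0$, uniformly in the off-diagonal block $Y$, is the step I expect to demand the most care.
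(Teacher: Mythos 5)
Your treatment of the normalised case $f(0)=0$ is correct and is essentially the paper's own argument: the same rotation of $A\oplus B$ into a positive block matrix with diagonal blocks $(1-x)A+xB$ and $xA+(1-x)B$, followed by the decomposition of a positive block matrix into two rotated corners and the subadditivity inequality $f(S+T)\le U'f(S)U'^*+V'f(T)V'^*$, with the $\oplus\,0$ blocks absorbed into isometries. Your diagnosis of the remaining difficulty is also accurate: the shift $f=f(0)+g$ leaves a term $f(0)I_{2n}$ that could only be absorbed if $U_0U_0^*+V_0V_0^*=I$, which would force the false single-unitary inequality.

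However, the proposal does not actually close that gap, and the route you sketch is not one I would trust. Knowing eigenvalue inequalities between $f(A)\oplus f(B)$ and the pair $f(C),f(D)$ (via Weyl or Ky Fan) does not by itself yield isometries $U_0,V_0$ realising the \emph{operator} inequality $f(A)\oplus f(B)\le U_0f(C)U_0^*+V_0f(D)V_0^*$; the existence of such isometries given prescribed spectra is a Horn-type problem, and you concede that the ``explicit construction, uniformly in $Y$'' is missing. The paper avoids all of this with a short reduction that bypasses the spectral analysis entirely: first replace $f$ by a piecewise affine monotone concave function agreeing with $f$ on the (finite) union of the spectra of $A$, $B$, $(1-x)A+xB$, $xA+(1-x)B$; then by continuity assume $A,B$ invertible, and set $r:=\min\{\lambda_n^{\downarrow}(A),\lambda_n^{\downarrow}(B)\}>0$. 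Since $(1-x)A+xB\ge rI$ and $xA+(1-x)B\ge rI$ as well, all four matrices have spectra in $[r,\infty)$, so one may replace $f$ by $h_r$, defined to equal $f$ on $[r,\infty)$ and to be linear on $[0,r]$ with $h_r(0)=0$. This $h_r$ is still monotone concave, vanishes at $0$, and produces exactly the same four matrix values $f(A)$, $f(B)$, $f((1-x)A+xB)$, $f(xA+(1-x)B)$; hence the case already proved applies verbatim. You should adopt this (or an equivalent) reduction; as it stands, your proof is complete only for $f(0)=0$.
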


\vskip 5pt
\begin{proof} Let $x=\sin^2 \theta$, $1-x=\cos^2\theta$, consider the unitary Hermitian matrix
$$
R:=\begin{bmatrix}
\sqrt{1-x} I& \sqrt{x}I\\  \sqrt{x}I& -\sqrt{1-x} I
\end{bmatrix}
$$
and note the unitary congruence
\begin{equation}\label{eq-cong}
R
\begin{bmatrix}
A & 0 \\ 0 & B
\end{bmatrix}R=
\begin{bmatrix} (1-x)A+xB
 & \star \\ \star &  xA+(1-x)B
\end{bmatrix}
\end{equation}
where the stars hold for unspecified entries. 

Now, recall the decomposition \cite[Lemma 3.4]{BL1} : Given any positive semidefinite matrix $\begin{bmatrix}
C & X \\ X^* & D
\end{bmatrix}$ partitioned in four blocks in $\bM_n$, we have
$$
\begin{bmatrix}
C & X \\ X^* & D
\end{bmatrix}
=
U_0\begin{bmatrix}
C & 0 \\ 0 & 0
\end{bmatrix}
U_0^*+
V_0
\begin{bmatrix}
0 & 0 \\ 0 & D
\end{bmatrix}
V_0^*
$$
for some unitary matrices $U_0,V_0\in\bM_{2n}$. Applying this to \eqref{eq-cong}, we obtain
\begin{equation}\label{eq-dec} 
\begin{bmatrix}
A & 0 \\ 0 & B\end{bmatrix} = U_1\begin{bmatrix}
(1-x)A+xB & 0 \\ 0 & 0
\end{bmatrix}
U_1^*+
V_1
\begin{bmatrix}
0 & 0 \\ 0 & xA+(1-x)B
\end{bmatrix}
V_1^*
\end{equation}
for two unitary matrices $U_1,V_1\in\bM_{2n}$.

Next, recall the subadditivity inequality \cite[Theorem 3.1]{BL1} : Given any pair of positive semidefinite matrices $S,T\in\bM_d$, we have
$$
f(S+T) \le U_2f(S)U_2^* + V_2f(T)V_2^*
$$
for two unitary matrices $U_2,V_2\in\bM_{d}$. Applying this to \eqref{eq-dec} yields
$$
\begin{bmatrix}
f(A) & 0 \\ 0 & f(B)\end{bmatrix} \le U\begin{bmatrix}
f((1-x)A+xB) & 0 \\ 0 & f(0)I
\end{bmatrix}
U^*+
V
\begin{bmatrix}
f(0)I & 0 \\ 0 & f(xA+(1-x)B)
\end{bmatrix}
V^*
$$
for two unitary matrices $U,V\in\bM_{2n}$. This proves the theorem when $f(0)=0$.

 To derive the general case,  we may assume that $f(t)$ is continuous (a concave function on $[0,\infty)$ might be discontinuous at $0$). Indeed, it suffices to consider the values of $f(t)$  on a finite set, the union of the spectra of the four matrices $A$, $B$, $(1-x)A+xB$ and $xA+(1-x)B$. Hence we may replace $f(t)$ by a piecewise affine monotone concave function $h(t)$ with $h(0)\ge 0$. Now, since $h(t)$ is continuous, a limit argument allows us to suppose that $A$ and $B$ are invertible. Therefore, letting $\lambda_n^{\downarrow}(Z)$ denote the smallest eigenvalue of $Z\in\bM_n^{s.a}$,
$$
r:=\min\{\lambda_n^{\downarrow}(A), \lambda_n^{\downarrow}(B)\}>0.
$$
We may then replace $h(t)$ by $h_r(t)$ defined as $h_r(t):=h(t)$ for $t\ge r$, $h_r(0):=0$ and $h_r(s):=h(r)\frac{s}{r}$ for $0\le s\le r$. The function $h_r(t)$ is monotone concave on $[0,\infty)$ and vanishes at $0$, thus the case $f(0)=0$ entails the general case.
\end{proof}

\vskip 5pt
Let  $\lambda_j^{\downarrow}(Z)$, $j=1,2,\ldots n$, denote the  eigenvalues of $Z\in\bM_n^{s.a}$ arranged in the nonincreasing order.

\vskip 5pt
\begin{cor}\label{coreigen1}  Let $A, B\in\bM_{n}^+$, let $0<x<1$, and let $f(t)$ be a nonnegative concave function  on $[0,\infty)$. Then, for  $j=0,1,\ldots, n-1$,
$$
\lambda_{1+2j}^{\downarrow}\left(f(A\oplus B)\right) \le \lambda_{1+j}^{\downarrow}\left(f(xA+(1-x)B) \right) + \lambda_{1+j}^{\downarrow}\left(f((1-x)A+xB)\right)
$$
and
$$
\lambda_{1+j}^{\downarrow}\left\{\left(f(xA+(1-x)B) \right) + \left(f((1-x)A+xB)\right)\right\}
 \le 2\lambda_{1+j}^{\downarrow}\left(f\left(\frac{A+B}{2}\right) \right).
$$
\end{cor}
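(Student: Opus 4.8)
The plan is to read off both inequalities from the two operator inequalities already at hand, Theorem \ref{th-HH1} for the first and Theorem \ref{th-convex} for the second, using only the Weyl monotonicity principle and the Weyl sum inequality for eigenvalues. A preliminary remark makes both theorems applicable under the mere hypothesis \emph{nonnegative concave}: a concave function on $[0,\infty)$ that stays nonnegative is automatically nondecreasing, since a negative slope at any point would force $f$ to decrease without bound by concavity, contradicting $f\ge 0$. Hence \emph{nonnegative concave} already entails \emph{monotone concave with $f(0)\ge 0$}, so that both cited theorems apply and, in the second case, the single unitary option $U=V$ is legitimate. Throughout I write $C:=(1-x)A+xB$ and $D:=xA+(1-x)B$, so that $f(C),f(D)\ge 0$ and $\tfrac{C+D}{2}=\tfrac{A+B}{2}$.

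For the first inequality I would start from Theorem \ref{th-HH1}, which supplies isometries $U,V\in\bM_{2n,n}$ with
$$
f(A)\oplus f(B)\le Uf(C)U^*+Vf(D)V^*.
$$
Applying $\lambda_{1+2j}^{\downarrow}$ to both sides, Weyl monotonicity ($S\le T\Rightarrow\lambda_k^{\downarrow}(S)\le\lambda_k^{\downarrow}(T)$) together with $f(A\oplus B)=f(A)\oplus f(B)$ reduces matters to the right-hand side. I would then use the Weyl sum inequality $\lambda_{p+q-1}^{\downarrow}(S+T)\le\lambda_p^{\downarrow}(S)+\lambda_q^{\downarrow}(T)$ with $p=q=1+j$, so that $p+q-1=1+2j$, splitting the bound into $\lambda_{1+j}^{\downarrow}(Uf(C)U^*)+\lambda_{1+j}^{\downarrow}(Vf(D)V^*)$. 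The last step is the observation that, $U$ being an isometry and $f(C)\ge 0$, the $2n\times 2n$ matrix $Uf(C)U^*$ is positive semidefinite and shares the nonzero spectrum of $f(C)$, padded by zeros; since $1+j\le n$, its $(1+j)$-th largest eigenvalue equals that of $f(C)$, and likewise for $V$ and $D$. This yields exactly the first claim.

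For the second inequality I would invoke Theorem \ref{th-convex} applied to the convex function $-f$ with the pair $(C,D)$ in place of $(A,B)$. Because $\tfrac{C+D}{2}=\tfrac{A+B}{2}$ and $-f$ is monotone, the monotone case of that theorem furnishes a single unitary $U\in\bM_n$ with
$$
f\!\left(\frac{A+B}{2}\right)\ge U\,\frac{f(C)+f(D)}{2}\,U^*.
$$
Taking $\lambda_{1+j}^{\downarrow}$ and using Weyl monotonicity together with the unitary invariance of the spectrum gives $\lambda_{1+j}^{\downarrow}\!\left(f(\tfrac{A+B}{2})\right)\ge\tfrac12\,\lambda_{1+j}^{\downarrow}(f(C)+f(D))$, which is the asserted bound after multiplying by $2$.

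I do not anticipate a serious obstacle: the arguments are routine once the two operator inequalities are in hand. The points that demand a little care are the verification that nonnegativity forces monotonicity (needed both to apply the cited theorems and to secure $U=V$ in the second step) and the bookkeeping that an isometric dilation $X\mapsto UXU^*$ of a positive semidefinite $X$ preserves its top $n$ eigenvalues. Making the index shift in the Weyl sum inequality land precisely on $1+2j$ is the one place where an off-by-one slip could occur, so I would double-check that $p=q=1+j$ indeed gives $p+q-1=1+2j$ and that $1+2j\le 2n$ for $j\le n-1$.
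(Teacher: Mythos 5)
Your proof is correct and takes essentially the same route as the paper's, which simply cites Theorem \ref{th-HH1} together with Weyl's inequalities for the first bound and Theorem \ref{th-convex} for the second. You additionally supply details the paper leaves implicit --- notably that a nonnegative concave function on $[0,\infty)$ is automatically nondecreasing (so the cited theorems apply and the single-unitary case $U=V$ is available), and that an isometric dilation of a positive semidefinite matrix preserves its top $n$ eigenvalues --- all of which check out.
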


\vskip 5pt
\begin{proof} The first inequality is a straighforward consequence of Theorem \ref{th-HH1} combined with the  inequalities of Weyl \cite[p.\ 62]{Bh} :
For all $S,T\in\bM_d^{s.a}$ and $j,k\in\{0,\ldots,d-1\}$ such that $j+k+1\le d$,
$$
\lambda_{1+j+k}^{\downarrow}(S+T) \le 
\lambda_{1+j}^{\downarrow}(S) + 
\lambda_{1+k}^{\downarrow}(T).
$$
The second inequality is not new; it follows from Theorem \ref{th-convex}.
\end{proof}

\vskip 5pt
\begin{cor}\label{corSchatt1}  Let $A, B\in\bM_{n}^+$, let $0<x<1$, and let $f(t)$ be nonnegative concave function  on $[0,\infty)$. Then, for all $p\ge 1$,
$$
\left(\|f(A)\|_p^p + \|f(B)\|_p^p\right)^{1/p} \le \|f(xA+(1-x)B))\|_p+ \|f((1-x)A+xB)\|_p.
$$
\end{cor}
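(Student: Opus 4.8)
The plan is to deduce this from the operator inequality of Theorem~\ref{th-HH1}. The first thing I would check is that the hypotheses match: Theorem~\ref{th-HH1} requires $f$ to be \emph{monotone} concave with $f(0)\ge 0$, whereas here $f$ is only assumed nonnegative and concave. This gap is only apparent, because a nonnegative concave function on $[0,\infty)$ is automatically nondecreasing. Indeed, if $f'$ were negative at some point $t_0$, concavity (the nonincreasingness of $f'$) would force $f(t)\le f(t_0)-c(t-t_0)\to-\infty$, contradicting $f\ge 0$; and of course $f(0)\ge 0$. So Theorem~\ref{th-HH1} applies verbatim, yielding isometries $U,V\in\bM_{2n,n}$ with
$$
f(A)\oplus f(B)\le Uf((1-x)A+xB)U^*+Vf(xA+(1-x)B)V^*.
$$

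Next I would pass to Schatten norms. Since both sides are positive semidefinite and $0\le X\le Y$ forces $\lambda_j^{\downarrow}(X)\le\lambda_j^{\downarrow}(Y)$ by Weyl's monotonicity principle, hence $\|X\|_p\le\|Y\|_p$, the displayed inequality gives
$$
\|f(A)\oplus f(B)\|_p \le \|Uf((1-x)A+xB)U^*+Vf(xA+(1-x)B)V^*\|_p.
$$
For the left-hand side, the block-diagonal structure splits the norm, so $\|f(A)\oplus f(B)\|_p=\left(\|f(A)\|_p^p+\|f(B)\|_p^p\right)^{1/p}$. For the right-hand side, I would apply the triangle inequality for $\|\cdot\|_p$, followed by the observation that congruence by an isometry preserves Schatten norms: for $M\ge 0$ the nonzero eigenvalues of $UMU^*$ coincide with those of $U^*UM=M$, whence $\|UMU^*\|_p=\|M\|_p$. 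This produces exactly the two terms $\|f((1-x)A+xB)\|_p$ and $\|f(xA+(1-x)B)\|_p$, and assembling the pieces gives the claim.

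The only genuinely delicate point is the hypothesis reconciliation at the start: one must notice that nonnegativity together with concavity on the half-line already forces monotonicity, so that Theorem~\ref{th-HH1} is indeed available under the weaker stated assumptions. The remaining manipulations — monotonicity of $\|\cdot\|_p$ under the positive semidefinite order, additivity of the $p$-norm over direct sums, and its invariance under isometric congruence — are all standard and carry no real obstacle.
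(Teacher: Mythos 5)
Your proof is correct and follows the same route as the paper: apply Theorem~\ref{th-HH1}, use monotonicity of $\|\cdot\|_p$ on the positive semidefinite order, then the triangle inequality together with the isometric invariance $\|UMU^*\|_p=\|M\|_p$. Your extra observation that nonnegativity plus concavity on $[0,\infty)$ forces monotonicity (so that the theorem's hypotheses are genuinely met) is a worthwhile detail the paper leaves implicit.
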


\vskip 5pt
\begin{proof} From Theorem \ref{th-HH1} we have 
$$
\|f(A)\oplus f(B)\|_p \le \|Uf((1-x)A+xB)U^* + Vf(xA+(1-x)B)V^*\|_p.
$$
The triangle inequality for $\|\cdot\|_p$ completes the proof.
\end{proof}

\vskip 5pt
Corollary \ref{corSchatt1} with $f(t)=t^q$ reads as the following trace inequality.

\vskip 5pt
\begin{cor}\label{cortrace1}  Let $A, B\in\bM_{n}^+$ and $0<x<1$. Then, for all $p\ge 1\ge q\ge 0$,
$$
\left\{{\mathrm{Tr\,}}A^{pq} + \mathrm{Tr\,}B^{pq}\right\}^{1/p} \le \left\{{\mathrm{Tr\,}}(xA+(1-x)B)^{pq}\right\}^{1/p}+ \left\{{\mathrm{Tr\,}}((1-x)A+xB))^{pq}\right\}^{1/p}.
$$
\end{cor}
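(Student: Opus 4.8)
The plan is to specialize Corollary~\ref{corSchatt1} to the power function $f(t)=t^q$ and then rewrite the resulting Schatten norms as traces. First I would verify the hypotheses: for $0\le q\le 1$ the map $t\mapsto t^q$ is nonnegative and concave on $[0,\infty)$, so Corollary~\ref{corSchatt1} applies for every $p\ge 1$, and with this choice of $f$ the four matrices $f(A)$, $f(B)$, $f(xA+(1-x)B)$, $f((1-x)A+xB)$ become the $q$-th powers of $A$, $B$, $xA+(1-x)B$, $(1-x)A+xB$.

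The key point is the elementary identity
$$
\|M^q\|_p^p={\mathrm{Tr\,}}M^{pq},\qquad M\in\bM_n^+ .
$$
Indeed, $M^q$ is again positive semidefinite with eigenvalues $\lambda_j(M)^q$, so its Schatten norm is read off from these eigenvalues, and $(M^q)^p=M^{pq}$ by the functional calculus; hence $\|M^q\|_p^p=\sum_{j=1}^n\lambda_j(M)^{pq}={\mathrm{Tr\,}}M^{pq}$.

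Applying this identity to each of the four positive matrices above, the inequality of Corollary~\ref{corSchatt1} for $f(t)=t^q$ turns verbatim into the asserted trace inequality: the left-hand side becomes $\left({\mathrm{Tr\,}}A^{pq}+{\mathrm{Tr\,}}B^{pq}\right)^{1/p}$, while the two summands on the right-hand side become $\left({\mathrm{Tr\,}}(xA+(1-x)B)^{pq}\right)^{1/p}$ and $\left({\mathrm{Tr\,}}((1-x)A+xB)^{pq}\right)^{1/p}$. I do not expect any genuine obstacle, since the statement is merely a transcription of Corollary~\ref{corSchatt1} into trace language; the only points requiring (minimal) care are the concavity and nonnegativity of $t\mapsto t^q$ on $[0,\infty)$ for $0\le q\le 1$ and the identity relating $\|M^q\|_p$ to a trace.
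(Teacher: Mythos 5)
Your proposal is correct and is exactly the paper's argument: the paper states that Corollary \ref{cortrace1} is just Corollary \ref{corSchatt1} with $f(t)=t^q$, rewritten via $\|M^q\|_p^p={\mathrm{Tr}\,}M^{pq}$ for $M\in\bM_n^+$. No difference in approach.
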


\vskip 5pt
Choosing in Corollary \ref{cortrace1} $q=1$ and $x=1/2$ yields McCarthy's inequality,
$$
{\mathrm{Tr\,}}A^{p} +{\mathrm{Tr\,}}B^{p} \le {\mathrm{Tr\,}}(A+B)^{p}. $$
This shows that Theorem \ref{th-HH1} is already significant with $f(t)=t$.
Our next corollary, for convex functions, is equivalent to Theorem \ref{th-HH1}.

\vskip 5pt
\begin{cor}\label{cor-HH1}  Let $A, B\in\bM_{n}^+$, let $0<x<1$, and let $g(t)$ be a monotone convex function  on $[0,\infty)$ with $g(0)\le 0$. Then, for some isometry matrices $U,V\in\bM_{2n,n}$,
$$
g(A)\oplus g(B) \ge Ug((1-x)A+xB)U^* + Vg(xA+(1-x)B)V^*.
$$
\end{cor}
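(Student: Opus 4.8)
The plan is to deduce this directly from Theorem \ref{th-HH1} by replacing a convex function with its negative. Set $f(t) := -g(t)$ on $[0,\infty)$. Since $g$ is monotone convex with $g(0)\le 0$, the function $f$ is monotone concave: negation turns convexity into concavity and preserves monotonicity (merely reversing its sense, so that an increasing convex $g$ gives a decreasing concave $f$ and vice versa), while $f(0)=-g(0)\ge 0$. Thus $f$ satisfies exactly the hypotheses of Theorem \ref{th-HH1}.

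Applying that theorem to $A,B\in\bM_n^+$, the same $0<x<1$, and this $f$, I obtain isometries $U,V\in\bM_{2n,n}$ with
$$
f(A)\oplus f(B) \le U f((1-x)A+xB)U^* + V f(xA+(1-x)B)V^*.
$$
I would then substitute $f=-g$ everywhere. Using $f(C)=-g(C)$ for each self-adjoint argument $C$ and the identity $(-g(A))\oplus(-g(B))=-\bigl(g(A)\oplus g(B)\bigr)$, the display becomes
$$
-\bigl(g(A)\oplus g(B)\bigr) \le -\bigl(U g((1-x)A+xB)U^* + V g(xA+(1-x)B)V^*\bigr).
$$
Multiplying both sides by $-1$ reverses the order and yields precisely the asserted inequality, with the very same isometries $U,V$.

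There is essentially no obstacle here, which is exactly why the text describes the corollary as equivalent to the theorem: all the content is carried by Theorem \ref{th-HH1}, and the statement is its mirror image under $g\mapsto -g$. The only matters needing verification are the bookkeeping on the hypotheses (concavity versus convexity, the correspondence $f(0)\ge 0 \leftrightarrow g(0)\le 0$, and the preservation of monotonicity) and the elementary fact that negation reverses the Löwner order, which is what converts the upper bound of Theorem \ref{th-HH1} into the lower bound claimed here.
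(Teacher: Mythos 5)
Your proof is correct and is exactly the argument the paper intends: the paper gives no separate proof, merely noting that the corollary is "equivalent to Theorem \ref{th-HH1}," and the substitution $f=-g$ with the sign bookkeeping you carry out is precisely that equivalence.
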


\vskip 5pt
Since the Schatten $q$-quasinorms $\|\cdot\|_q$, $0<q<1$, are superadditive functionals on $\bM_n^+$ (see \cite[Proposition 3.7]{BH1} for a stronger statement), Corollary \ref{cor-HH1} yields the next one.

\vskip 5pt
\begin{cor}\label{corSchatt2} Let $A, B\in\bM_{n}^+$, let $0<x<1$, and let $g(t)$ be a nonnegative convex function  on $[0,\infty)$ with $g(0)\le 0$. Then, for all $0<q<1$,  
$$
\left(\|g(A)\|_q^q+ \|g(B)\|_q^q\right)^{1/q} \ge \|g(xA+(1-x)B))\|_q+ \|g((1-x)A+xB)\|_q.
$$
\end{cor}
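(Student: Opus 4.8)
The plan is to deduce the inequality directly from the operator inequality of Corollary \ref{cor-HH1} by applying the Schatten $q$-quasinorm and exploiting three of its elementary properties: monotonicity with respect to the order $\le$, superadditivity on the positive cone, and invariance under isometric congruences.

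First I would record the starting point. Since $g(t)$ is nonnegative and monotone convex with $g(0)\le 0$ (so in fact $g(0)=0$), Corollary \ref{cor-HH1} furnishes isometries $U,V\in\bM_{2n,n}$ with
$$
g(A)\oplus g(B) \ge Ug((1-x)A+xB)U^* + Vg(xA+(1-x)B)V^*.
$$
All four matrices $g(A)$, $g(B)$, $g((1-x)A+xB)$, $g(xA+(1-x)B)$ are positive semidefinite because $g\ge 0$, and congruence by an isometry preserves positivity; hence both sides live in $\bM_{2n}^+$, which is exactly the setting in which the quasinorm properties apply.

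Second I would apply $\|\cdot\|_q$. For $0<q<1$ the quasinorm is monotone on $\bM_{2n}^+$: Weyl's monotonicity of eigenvalues together with the increasingness of $t\mapsto t^q$ gives $S\ge T\ge 0 \Rightarrow \|S\|_q\ge\|T\|_q$. Applying this to the displayed inequality and then invoking the cited superadditivity of $\|\cdot\|_q$ on the positive cone to split the resulting sum, I obtain
$$
\|g(A)\oplus g(B)\|_q \ge \|Ug((1-x)A+xB)U^*\|_q + \|Vg(xA+(1-x)B)V^*\|_q.
$$

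Finally I would simplify both ends by isometric invariance. For an isometry $U$ and $M\in\bM_n^+$ one has $(UMU^*)^q=UM^qU^*$, so $\|UMU^*\|_q^q=\mathrm{Tr}\,UM^qU^*=\mathrm{Tr}\,M^q=\|M\|_q^q$, and likewise for $V$; on the left, the eigenvalues of $g(A)\oplus g(B)$ are the union of those of $g(A)$ and $g(B)$, whence $\|g(A)\oplus g(B)\|_q=(\|g(A)\|_q^q+\|g(B)\|_q^q)^{1/q}$. Combining these identities with the previous display yields precisely the asserted inequality. I do not expect a genuine obstacle here: each ingredient is standard, and the only point demanding care is to invoke the quasinorm facts in the correct direction — for $0<q<1$ both monotonicity and superadditivity push the inequality the way needed, which is exactly the reversal relative to the norm case $p\ge 1$ treated in Corollary \ref{corSchatt1}.
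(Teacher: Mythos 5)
Your argument is correct and is exactly the paper's route: the paper deduces the corollary in one line from Corollary \ref{cor-HH1} together with the superadditivity of $\|\cdot\|_q$ on the positive cone, and your write-up simply makes explicit the same monotonicity, superadditivity and isometric-invariance steps (note also that a nonnegative convex $g$ with $g(0)=0$ is automatically nondecreasing, so the monotonicity hypothesis of Corollary \ref{cor-HH1} is indeed available). No gaps.
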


\vskip 5pt
From the first inequality of Corollary \ref{coreigen1} we also get the following statement.

\vskip 5pt
\begin{cor}\label{coreigen2}  Let $A, B\in\bM_{n}^+$ and let $f(t)$ be a nonnegative concave function  on $[0,\infty)$. Then, for $j=0,1,\ldots, n-1$,
$$
\lambda_{1+2j}^{\downarrow}\left(f(A\oplus B)\right) \le 2\int_{0}^1
\lambda_{1+j}^{\downarrow}\left(f(xA+(1-x)B) \right) \,{\mathrm{d}}x.
$$
\end{cor}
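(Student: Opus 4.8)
The plan is to obtain this bound by averaging the first inequality of Corollary \ref{coreigen1} over the weight parameter $x\in(0,1)$. For each fixed $x$ that inequality reads
$$
\lambda_{1+2j}^{\downarrow}\left(f(A\oplus B)\right) \le \lambda_{1+j}^{\downarrow}\left(f(xA+(1-x)B) \right) + \lambda_{1+j}^{\downarrow}\left(f((1-x)A+xB)\right),
$$
and crucially its left-hand side is independent of $x$. I would therefore integrate both sides against $\mathrm{d}x$ over $[0,1]$: the left-hand side is simply reproduced, so that $\lambda_{1+2j}^{\downarrow}(f(A\oplus B))$ is bounded above by the sum of the two integrals coming from the right-hand side.

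The single observation that makes everything collapse to the stated form is that these two integrals are equal. This follows from the change of variable $x\mapsto 1-x$, which interchanges the matrices $xA+(1-x)B$ and $(1-x)A+xB$; hence
$$
\int_0^1 \lambda_{1+j}^{\downarrow}\left(f((1-x)A+xB)\right)\,\mathrm{d}x = \int_0^1 \lambda_{1+j}^{\downarrow}\left(f(xA+(1-x)B)\right)\,\mathrm{d}x.
$$
Adding the two copies then produces the factor $2$ in front of a single integral, which is exactly the asserted inequality.

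There is no serious obstacle here; the only point meriting a word is the integrability of the map $x\mapsto\lambda_{1+j}^{\downarrow}(f(xA+(1-x)B))$. This is harmless: $x\mapsto xA+(1-x)B$ is affine with values in $\bM_n^+$, the eigenvalues of a Hermitian matrix depend continuously on it, and $f$ may be assumed continuous on $(0,\infty)$ by the same reduction used in the proof of Theorem \ref{th-HH1}. Thus the integrand is continuous (outside at most finitely many points) and bounded on $[0,1]$, so the integral is well defined and the averaging argument goes through without difficulty.
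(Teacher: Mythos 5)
Your proof is correct and follows exactly the route the paper intends: the paper introduces this corollary with the phrase ``From the first inequality of Corollary \ref{coreigen1} we also get the following statement,'' i.e.\ one integrates that inequality over $x\in(0,1)$ and uses the symmetry $x\mapsto 1-x$ to merge the two integrals into the factor $2$. Your additional remark on integrability is a harmless (and reasonable) extra precaution.
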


\vskip 5pt
Up to now we have dealt with  convex combinations $(1-x)A +xB$ with scalar weights. It is natural to search for extensions with matricial weights ($C^*$-convex combinations). We may
generalize Theorem \ref{th-HH1} with commuting normal weights.

\vskip 5pt
\begin{theorem}\label{th-HH2}  Let $A, B\in\bM_{n}^+$ and let $f(t)$ be a monotone concave function  on $[0,\infty)$ with $f(0)\ge 0$. If $X,Y\in\bM_n$ are normal and satisfy  $XY=YX$ and $X^*X+Y^*Y=I$, then, for some isometry  matrices $U,V\in\bM_{2n,n}$,
$$
f(A)\oplus f(B) \le Uf(X^*AX+Y^*BY)U^* + Vf(Y^*AY+X^*BX)V^*.
$$
\end{theorem}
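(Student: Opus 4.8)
The plan is to imitate the proof of Theorem \ref{th-HH1}, replacing the scalar reflection by a $2n\times 2n$ unitary built from the operator weights. Concretely, I would look for a unitary $R\in\bM_{2n}$ realizing a unitary congruence
\[
R\begin{bmatrix} A & 0 \\ 0 & B\end{bmatrix}R^* = \begin{bmatrix} X^*AX+Y^*BY & \star \\ \star & Y^*AY+X^*BX\end{bmatrix},
\]
so that, exactly as before, \cite[Lemma 3.4]{BL1} splits the right-hand side into its two diagonal-block pieces and \cite[Theorem 3.1]{BL1} (subadditivity of $f$) delivers the asserted inequality in the isometry form. The passage from $f(0)=0$ to the general case $f(0)\ge 0$ would then be handled verbatim by the piecewise-affine truncation $h_r$ used in Theorem \ref{th-HH1}.

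The natural candidate, generalizing the reflection of Theorem \ref{th-HH1}, is
\[
R=\begin{bmatrix} X^* & Y^* \\ -Y & X\end{bmatrix}.
\]
The key new point is the verification that $R$ is unitary, and this is precisely where the hypotheses enter. The diagonal blocks of $R^*R$ and $RR^*$ reduce to $X^*X+Y^*Y=I$ and $XX^*+YY^*=I$, the latter by normality; the off-diagonal blocks are, up to sign, $XY-YX$, $XY^*-Y^*X$, $X^*Y-YX^*$ and $X^*Y^*-Y^*X^*$. The first vanishes by the assumption $XY=YX$, and the remaining three vanish by the Fuglede--Putnam theorem, since two commuting normal matrices also commute with each other's adjoints. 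With $R$ unitary, the top-left block of the congruence is exactly $X^*AX+Y^*BY$, as wanted.

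The step I expect to be the main obstacle is the second diagonal block. A direct computation gives the bottom-right block of $R(A\oplus B)R^*$ as $YAY^*+XBX^*$, which equals the stated $Y^*AY+X^*BX$ when the weights are positive, but not in general: writing $X=\Phi_X|X|$ and $Y=\Phi_Y|Y|$ with commuting unitary phases, the terms $YAY^*$ and $Y^*AY$ differ by conjugation with $\Phi_Y^2$. Moreover, matching the first block amounts to absorbing $\Phi_X$ into $A$ and $\Phi_Y$ into $B$, whereas the second block demands the opposite absorption, and these are incompatible unless the phases commute with $A,B$. Thus the delicate task is to reconcile both blocks simultaneously, either by a joint diagonalization of $X,Y$ that reduces matters to positive weights while carefully tracking the phases, or by constructing directly a positive matrix unitarily equivalent to $A\oplus B$ carrying the two prescribed diagonal blocks through a Schur--Horn/Thompson-type argument; here the identity $\mathrm{Tr}(X^*AX+Y^*BY)+\mathrm{Tr}(Y^*AY+X^*BX)=\mathrm{Tr}\,A+\mathrm{Tr}\,B$, which follows from $XX^*+YY^*=I$, is a reassuring necessary condition. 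Once such a congruence is secured, the decomposition, subadditivity, and truncation steps proceed routinely as above.
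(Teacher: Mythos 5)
Your overall strategy --- produce a $2n\times 2n$ unitary congruence carrying $A\oplus B$ onto a block matrix whose diagonal blocks are $X^*AX+Y^*BY$ and $Y^*AY+X^*BX$, and then run the decomposition lemma, the subadditivity theorem, and the $h_r$ truncation verbatim --- is exactly the paper's strategy; the paper uses
$$
H=\begin{bmatrix}X&Y\\ Y&-X\end{bmatrix}
$$
and the congruence $H^*(A\oplus B)H$. But your proposal stops short of a proof. Your $R$ is indeed unitary (the Fuglede--Putnam verification is correct), yet, as you compute, the $(2,2)$ block of $R(A\oplus B)R^*$ is $YAY^*+XBX^*$, not $Y^*AY+X^*BX$, and you leave the reconciliation of the two blocks as an open task. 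This is a genuine gap, not a routine verification: once the first block column of the congruence matrix is (up to a right unitary factor) $\begin{bmatrix}X\\ Y\end{bmatrix}$, the second block column must span the orthogonal complement of its range, which is the range of $\begin{bmatrix}Y^*\\ -X^*\end{bmatrix}$, so the $(2,2)$ block is forced to be unitarily equivalent to $YAY^*+XBX^*$; when the phases of $X,Y$ do not commute with $A,B$ this is in general not unitarily equivalent to $Y^*AY+X^*BX$. So the obstruction you identify cannot be removed within this family of congruences.

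You should also know that the paper's own proof founders on the mirror image of your difficulty. Its matrix $H$ gives the correct diagonal blocks, but its unitarity requires $X^*Y=Y^*X$, whereas Fuglede's theorem only yields $X^*Y=YX^*$ and $XY^*=Y^*X$; the off-diagonal block of $H^*H$ is $X^*Y-Y^*X=YX^*-XY^*$, which need not vanish for commuting normal matrices. For instance $X=\tfrac{i}{\sqrt2}I$, $Y=\tfrac{1}{\sqrt2}I$ satisfy all the hypotheses but give $X^*Y-Y^*X=-iI$, so $H$ is not unitary (in that example the conclusion happens to reduce to Theorem \ref{th-HH1} with $x=1/2$, so the statement survives, but the argument does not). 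The paper's proof is valid when $X^*Y$ is Hermitian --- in particular for Hermitian commuting weights --- and in that case your $R$ and the paper's $H$ coincide up to trivial factors. In short: your instinct that reconciling the two diagonal blocks is ``the main obstacle'' is exactly right, neither your $R$ nor the paper's $H$ overcomes it for general commuting normal weights, and a complete proof would require either a genuinely different construction (your Schur--Horn/Thompson suggestion, or the majorization machinery of Section 3, which however only yields the weaker order of Theorem \ref{th-orbit}) or an additional phase-alignment hypothesis on $X$ and $Y$.
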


\vskip 5pt
\begin{proof} The proof is quite similar to that  of Theorem \ref{th-HH1} except that we first observe that the $2n\times 2n$ matrix
$$
H:=\begin{bmatrix}
X & Y \\ Y & -X
\end{bmatrix}
$$
is unitary. Indeed for two normal operators, $XY=YX$ ensures $X^*Y=YX^*$ and a direct computation shows that $H^*H$ is the identity in $\bM_{2n}$.
We then use the unitary congruence
\begin{equation}\label{eq-cong2}
H^*
\begin{bmatrix}
A & 0 \\ 0 & B
\end{bmatrix}H=
\begin{bmatrix} X^*AX+Y^*BY
 & \star \\ \star &  Y^*AY+X^*BX
\end{bmatrix}
\end{equation}
where the stars hold for unspecified entries. 
\end{proof}

\vskip 5pt
Hence, in the first inequality of Corolloray \ref{coreigen1} and in the series of Corollaries \ref{corSchatt1}-\ref{corSchatt2}, we can replace the scalar convex combinations $(1-x)A+B$ and $xA+(1-x)B$ by $C^*$-convex combinations $X^*AX+Y^*BY$ and $Y^*AY + X^*BX$ with commuting normal  weights. Here we explicitly state the generalization of Corollary \ref{cor-HH1}.

\vskip 5pt
\begin{cor}\label{cor-HH2}  Let $A, B\in\bM_{n}^+$ and let $g(t)$ be a monotone convex function  on $[0,\infty)$ with $g(0)\le 0$.  If $X,Y\in\bM_n$ are normal and satisfy  $XY=YX$ and $X^*X+Y^*Y=I$, then, for some isometry  matrices $U,V\in\bM_{2n,n}$,
$$
g(A)\oplus g(B) \ge Ug(X^*AX+Y^*BY)U^* + Vg(Y^*AY+X^*BX)V^*.
$$
\end{cor}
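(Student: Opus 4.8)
The plan is to deduce this corollary from Theorem \ref{th-HH2} by the very same negation device that turns Theorem \ref{th-HH1} into Corollary \ref{cor-HH1}. The guiding observation is that the passage between convex and concave functions is effected simply by changing sign, and that this change of sign converts the reversed ($\ge$) inequality into the original ($\le$) one.

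First I would set $f(t):=-g(t)$ on $[0,\infty)$. Since $g$ is monotone convex, its negative $f$ is monotone concave (negation reverses convexity and preserves monotonicity, possibly flipping its direction), and the hypothesis $g(0)\le 0$ becomes $f(0)=-g(0)\ge 0$. Thus $f$ satisfies exactly the hypotheses required of the concave function in Theorem \ref{th-HH2}, and the commuting normal weights $X,Y$ with $X^*X+Y^*Y=I$ are carried over unchanged.

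Next I would apply Theorem \ref{th-HH2} to this concave function $f$ with the given weights $X,Y$: for some isometries $U,V\in\bM_{2n,n}$,
$$
f(A)\oplus f(B) \le Uf(X^*AX+Y^*BY)U^* + Vf(Y^*AY+X^*BX)V^*.
$$
Substituting $f=-g$ and then multiplying both sides by $-1$ (which reverses the operator order) produces precisely the claimed inequality, with the same isometries $U,V$.

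The argument presents no genuine obstacle; the one point meriting a moment's care is simply confirming that $f=-g$ really lands in the hypothesis class of Theorem \ref{th-HH2}, that is, that negation preserves the combined monotone–concave–nonnegative-at-zero structure. Once this bookkeeping is checked, the conclusion is immediate. This is also exactly what underlies the remark in the text that the scalar-weight Corollary \ref{cor-HH1} is equivalent to Theorem \ref{th-HH1}: the present corollary is its verbatim analogue with $C^*$-convex combinations in place of scalar ones.
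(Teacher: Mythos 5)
Your proof is correct and coincides with the paper's intended argument: the paper presents Corollary \ref{cor-HH2} as the convex counterpart of Theorem \ref{th-HH2}, obtained exactly by the negation device $f=-g$ that it uses to pass from Theorem \ref{th-HH1} to Corollary \ref{cor-HH1}. The bookkeeping you check (monotone convex with $g(0)\le 0$ becomes monotone concave with $f(0)\ge 0$, and negation reverses the operator inequality) is all that is needed.
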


\section{Majorization}

The  results of Section 2 require two essential assumptions : to deal with positive matrices and with subadditive (concave) or superadditive (convex) functions. Thanks to these assumptions, we have obtained operator inequalities for the usual order in the positive cone.

The results of this section will consider Hermitian matrices and general convex or concave functions. We will  obtain majorization relations. We also consider $C^*$-convex combinations more general than those with commuting normal weights.  

We  recall the notion of majorization. Let $A,B\in\bM_n^{s.a}$.
We say that $A$ is weakly majorized by $B$ and we write $A\prec_{w}B$, if 
$$
\sum_{j=1}^k\lambda_j^{\downarrow}(A) \le  \sum_{j=1}^k\lambda_j^{\downarrow}(B)
$$
for all $k=1,2,\ldots, n$. If
 furthemore  the equality holds for $k=n$, that  is $A$ and $B$ have the same trace, then we say that $A$  is majorized by $B$, written $A\prec B$. See  \cite[Chapter 2]{Bh} and \cite{Hi} for a background on majorization. One easily checks that $A\prec_wB$ is equivalent to $A+C\prec B$ for some $C\in\bM_n^+$. We  need two fundamental  principles:
\begin{itemize}
\item[(1)]
  $A\prec B \Rightarrow g(A) \prec_w g(B)$ for all convex functions $g(t)$. 

\item[(2)] 
$A\prec_w B \iff {\mathrm{Tr\,}} f(A) \le {\mathrm{Tr\,}} f(B) $ for all nondecreasing convex functions $f(t)$. Equivalently $A\prec B \iff {\mathrm{Tr\,}} g(A) \le {\mathrm{Tr\,}} g(B) $ for all convex functions $g(t)$. 
\end{itemize}

\vskip 5pt
\begin{lemma}\label{lemma-trace}  Let $A, B\in\bM_{n}^{s.a}$ and let $g(t)$ be a convex function defined on an interval containing the spectra of $A$ and $B$. If $X,Y\in\bM_n$ satisfy $X^*X+Y^*Y=XX^*+YY^*=I$, then,
$$
{\mathrm{Tr\,}}\left\{g(X^*AX+Y^*BY)+g(Y^*AY+X^*BX)\right\}\le {\mathrm{Tr\,}} \left\{g(A)+g(B)\right\}.
$$
\end{lemma}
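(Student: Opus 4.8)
The plan is to apply the Hansen--Pedersen trace inequality separately to each of the two summands on the left-hand side, and then to collect terms by cyclicity of the trace together with the second identity $XX^*+YY^*=I$. Recall that the Hansen--Pedersen trace inequality asserts that whenever $g$ is convex on an interval containing the spectra of self-adjoint matrices $A_1,\dots,A_k$, and the matrices $C_1,\dots,C_k\in\bM_n$ satisfy $\sum_j C_j^*C_j=I$, then
$$
{\mathrm{Tr\,}}g\Bigl(\sum_j C_j^*A_jC_j\Bigr)\le {\mathrm{Tr\,}}\sum_j C_j^*g(A_j)C_j .
$$
The exact normalization $\sum_j C_j^*C_j=I$ is what makes this hold for \emph{every} convex $g$, with no side condition at $0$, which is exactly the situation of the lemma.

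First I would apply this with $k=2$, taking $(C_1,C_2)=(X,Y)$ and $(A_1,A_2)=(A,B)$. The hypothesis $X^*X+Y^*Y=I$ is precisely the required normalization, and as a short preliminary check, if $aI\le A,B\le bI$ then $aX^*X\le X^*AX\le bX^*X$ and $aY^*Y\le Y^*BY\le bY^*Y$, so $X^*AX+Y^*BY$ has spectrum in $[a,b]$ and $g$ is evaluated inside its domain. This yields
$$
{\mathrm{Tr\,}}g(X^*AX+Y^*BY)\le {\mathrm{Tr\,}}\{X^*g(A)X+Y^*g(B)Y\}.
$$
Applying the same inequality with the roles of $X$ and $Y$ interchanged, i.e.\ $(C_1,C_2)=(Y,X)$ and $(A_1,A_2)=(A,B)$, the normalization $Y^*Y+X^*X=I$ is the identical hypothesis and gives
$$
{\mathrm{Tr\,}}g(Y^*AY+X^*BX)\le {\mathrm{Tr\,}}\{Y^*g(A)Y+X^*g(B)X\}.
$$

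Adding the two displayed inequalities and regrouping the right-hand side according to $g(A)$ and $g(B)$, I would then use cyclicity of the trace to write ${\mathrm{Tr\,}}(X^*g(A)X+Y^*g(A)Y)={\mathrm{Tr\,}}\,g(A)(XX^*+YY^*)$ and, similarly, ${\mathrm{Tr\,}}(X^*g(B)X+Y^*g(B)Y)={\mathrm{Tr\,}}\,g(B)(XX^*+YY^*)$. At this stage the \emph{second} hypothesis $XX^*+YY^*=I$ does the remaining work, collapsing these to ${\mathrm{Tr\,}}\,g(A)$ and ${\mathrm{Tr\,}}\,g(B)$ and hence to the claimed bound ${\mathrm{Tr\,}}\{g(A)+g(B)\}$. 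I do not expect a serious obstacle; the one point worth emphasizing is that the two normalization identities enter at different stages and are both genuinely used: $X^*X+Y^*Y=I$ licenses the two applications of Hansen--Pedersen, while $XX^*+YY^*=I$ is what lets the collected terms telescope back to the trace of $g(A)+g(B)$.
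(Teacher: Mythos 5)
Your proposal is correct and follows essentially the same route as the paper: two applications of the Hansen--Pedersen trace inequality (one for each summand), followed by cyclicity of the trace and the identity $XX^*+YY^*=I$ to collapse the right-hand side to ${\mathrm{Tr\,}}\{g(A)+g(B)\}$. The only addition is your explicit check that the compressed matrices have spectra in $[a,b]$, which the paper leaves implicit.
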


\vskip 5pt
\begin{proof}
By the famous Hansen-Pedersen trace inequality \cite{HP}, see also \cite[Corollary 2.4]{BL1} for a generalization,
\begin{align*}
{\mathrm{Tr\,}}&\left\{g(X^*AX+Y^*BY)+g(Y^*AY+X^*BX)\right\} \\
&\le {\mathrm{Tr\,}}\left\{X^*g(A)X+Y^*g(B)Y)+Y^*g(A)Y+X^*g(B)X\right\} \\
&= {\mathrm{Tr\,}}\left\{(g(A) + g(B))(XX^* +YY^*)\right\}= {\mathrm{Tr\,}} \left\{g(A)+g(B)\right\}
\end{align*}
where the first equality follows from the cyclicity of the trace.
\end{proof}

\vskip 5pt
\begin{theorem}\label{th-orbit}  Let $A, B\in\bM_{n}^{s.a}$ and $X,Y\in\bM_n$. If  $X^*X+Y^*Y=XX^*+YY^*=I$, then, for some  unitary matrices $\{U_k\}_{k=1}^{2n}$ in $\bM_{2n}$,
$$
(X^*AX+Y^*BY)\oplus (Y^*AY+X^*BX) =\frac{1}{2n}\sum_{k=1}^{2n} U_k (A\oplus B)U_k^*.
$$
\end{theorem}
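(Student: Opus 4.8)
The plan is to recognize the asserted identity as the concrete, equal-weight form of the majorization $(X^*AX+Y^*BY)\oplus(Y^*AY+X^*BX)\prec A\oplus B$, and then to produce the $2n$ unitaries explicitly by combining the Schur--Horn theorem with a discrete Fourier averaging that annihilates off-diagonal entries.

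First I would set $C:=(X^*AX+Y^*BY)\oplus(Y^*AY+X^*BX)$ and $D:=A\oplus B$, both in $\bM_{2n}^{s.a}$. For every convex function $g$ defined on an interval containing the relevant spectra one has ${\mathrm{Tr\,}}g(C)={\mathrm{Tr\,}}g(X^*AX+Y^*BY)+{\mathrm{Tr\,}}g(Y^*AY+X^*BX)$ and ${\mathrm{Tr\,}}g(D)={\mathrm{Tr\,}}g(A)+{\mathrm{Tr\,}}g(B)$, so Lemma \ref{lemma-trace} is exactly the statement ${\mathrm{Tr\,}}g(C)\le{\mathrm{Tr\,}}g(D)$. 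The majorization criterion recalled above ($A\prec B \iff {\mathrm{Tr\,}}g(A)\le{\mathrm{Tr\,}}g(B)$ for all convex $g$) then yields $C\prec D$.

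It remains to establish the general principle that, for $d\times d$ Hermitian matrices (here $d=2n$), $C\prec D$ forces $C=\frac1d\sum_{k=1}^d U_kDU_k^*$ for suitable unitaries. Writing spectral decompositions $C=W\Lambda W^*$ and $D=ZMZ^*$ with diagonal $\Lambda,M$ collecting the eigenvalues of $C,D$, the relation $C\prec D$ means that the diagonal of $\Lambda$ is majorized by that of $M$. By the Schur--Horn theorem there is a unitary $O$ such that $N:=OMO^*$ is a Hermitian matrix with the eigenvalues of $D$ and whose diagonal equals that of $\Lambda$. I would then average over the diagonal phase unitaries $\Delta_k:=\diag(\omega^{k},\omega^{2k},\dots,\omega^{dk})$ with $\omega:=e^{2\pi i/d}$ and $k=1,\dots,d$: since $(\Delta_kN\Delta_k^*)_{ij}=\omega^{k(i-j)}N_{ij}$ and $\frac1d\sum_{k=1}^d\omega^{k(i-j)}=\delta_{ij}$ by orthogonality of characters, these conjugations preserve the diagonal of $N$ while cancelling its off-diagonal part, so $\frac1d\sum_{k=1}^d\Delta_kN\Delta_k^*=\Lambda$. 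Conjugating back gives $C=W\Lambda W^*=\frac1d\sum_{k=1}^d U_kDU_k^*$ with the unitaries $U_k:=W\Delta_kOZ^*$ and exactly $d=2n$ terms, as required.

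The routine part is the passage to $C\prec D$, which is immediate from Lemma \ref{lemma-trace} and the trace criterion for majorization. The main obstacle is the second step: converting an abstract majorization into an average of \emph{exactly} $2n$ unitary conjugates with \emph{equal} weights. The key idea is to use Schur--Horn to install the correct eigenvalues while prescribing the diagonal, and then to exploit character orthogonality, which is precisely what lets $d$ equally weighted conjugations erase the off-diagonal entries; by contrast, a Birkhoff/permutation decomposition of the doubly stochastic matrix implementing the vector majorization would not control the number of terms.
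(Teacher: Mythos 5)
Your proposal is correct, and its first half coincides exactly with the paper's argument: Lemma \ref{lemma-trace} plus the trace characterization of majorization gives $(X^*AX+Y^*BY)\oplus(Y^*AY+X^*BX)\prec A\oplus B$ (noting, as you implicitly do, that the spectra of the compressed matrices stay in any interval containing the spectra of $A$ and $B$, so the convex functions in the criterion are defined where needed). Where you diverge is the second half: the paper simply invokes \cite[Proposition 2.6]{BL2} as a black box to convert the majorization $S\prec T$ in $\bM_d^{s.a}$ into an average of $d$ unitary conjugates, whereas you reprove that conversion from scratch via the Schur--Horn theorem followed by averaging over the diagonal phase unitaries $\Delta_k=\diag(\omega^k,\dots,\omega^{dk})$, whose character-orthogonality kills the off-diagonal entries of $N=OMO^*$ and leaves exactly $\Lambda$. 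Your computation is sound, and it directly yields the stated \emph{equality} with exactly $2n$ equal weights, which is cleaner than the paper's slightly awkward citation of the result in the form $S\le\frac1d\sum_j V_jTV_j^*$ (which one must then combine with equality of traces to upgrade to an identity). The trade-off is self-containedness versus brevity: your route makes the theorem independent of \cite{BL2} and exhibits the unitaries $U_k=W\Delta_kOZ^*$ explicitly, at the cost of importing Schur--Horn; the paper's route is shorter but opaque about where the number $2n$ and the equal weights come from.
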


\vskip 5pt
\begin{proof} From Lemma \ref{lemma-trace}, we have the trace inequality
$$
{\mathrm{Tr\,}}g\left((X^*AX+Y^*BY)\oplus (Y^*AY+X^*BX) \right)\le  {\mathrm{Tr\,}}g(A\oplus B) 
$$
for all convex functions defined on $(-\infty,\infty)$. By a basic principle of majorization, this is equivalent to
\begin{equation}\label{eqmaj}
(X^*AX+Y^*BY)\oplus (Y^*AY+X^*BX) \prec A\oplus B.
\end{equation}
By \cite[Proposition 2.6]{BL2}, the majorization in $\bM_d^{s.a}$, $S\prec T$, ensures that (and thus is equivalent to)
$$
S\le \frac{1}{d} \sum_{j=1}^d V_jTV_j^*
$$
for $d$ unitary matrices $V_j\in\bM_d$. Applying this to \eqref{eqmaj} completes the proof.
 \end{proof}

\vskip 5pt
\begin{remark} We can prove the majorization \eqref{eqmaj} in a different way by oberving that our assumption on $X$ and $Y$ ensures that the map $\Phi$, defined on $\bM_{d}$, (here $d=2n$),
$$
\Phi \left(\begin{bmatrix} A&C \\ D&B\end{bmatrix}\right):=\begin{bmatrix}X^*AX+Y^*BY&0 \\ 0&Y^*AY+X^*BX\end{bmatrix},
$$
is a  positive linear map unital and trace preserving. Such maps are also called doubly stochastic. It is a classical result (see Ando's survey \cite[Section 7]{An1} and references therein) that we have
\begin{equation}\label{eqchan}
\Phi(Z) \prec Z
\end{equation}
for every doubly stochastic map on $\bM_d$ and Hermitian $Z$. Here, our map is even completely positive (it is a so called doubly stochastic map, or a unital quantum channel). It is well known in the litterature (\cite[Theorem 7.1]{An1}) that we have then 
$$
\Phi(Z)=\sum_{j=1}^{m} t_j U_j^* ZU_j
$$
for some convex combination $0<t_j\le 1$, $\sum_{j=1}^{m} t_j=1$, and some unitary matrices $U_j$ (these scalars $t_i$ and matrices $U_i$ depend on $Z$). In 2003, Zhan \cite{Zh}  noted that we can take $m=d$. That we can actually take an average,
$$
\Phi(Z)=\frac{1}{d}\sum_{j=1}^{d}  U_j^* ZU_j,
$$
follows from the quite  recent observation \cite[Proposition 2.6]{BL2} mentioned in the proof of Theorem \ref{th-orbit}. For unital quantum channels, one has the Choi-Kraus decomposition (see \cite[Chapter 3]{Bh2})
$$
\Phi(A) =\sum_{i=1}^{d^2} K_iAK_i^*
$$
for some weights $K_i\in\bM_d$ such that $\sum_{i=1}^{d^2} K_iK_i^*=\sum_{i=1}^{d^2} K_i^*K_i=I$. So, the proof of
Lemma 3.1  shows that, for unital quantum channels, one may derive the fundamental majorization \eqref{eqchan} from two results for convex functions: the basic principle of majorisation and Hansen-Pedersen's trace inequality. 
\end{remark}

\vskip 5pt
Theorem \ref{th-orbit} combined with a variation of Theorem \ref{th-convex}, \cite[Corolloray 2.4]{BL1}, provide a number of interesting operator inequalities. The next corollary can be regarded as another matrix version of the scalar inequality \eqref{eqext1}. We will give a proof independent of \cite[Corollary 2.4]{BL1}.

\vskip 5pt
\begin{cor}\label{cor-ext2}  Let $A, B\in\bM_n^{s.a}$ and let $f(t)$ be a convex function defined on an interval containing the spectra of $A$ and $B$.
 If $X,Y\in\bM_n$ satisfy $X^*X+Y^*Y=XX^*+YY^*=I$, then, for some  unitary matrices $\{U_k\}_{k=1}^{2n}$ in $\bM_{2n}$,
$$
f\left(X^*AX+Y^*BY\right)\oplus f\left(Y^*AY+X^*BX \right) \le\frac{1}{2n}\sum_{k=1}^{2n} U_k f\left(A\oplus B\right)U_k^*.
$$
\end{cor}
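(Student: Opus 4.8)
The plan is to reduce everything to the majorization \eqref{eqmaj} already established inside the proof of Theorem \ref{th-orbit}, and then transport it through $f$ using the two fundamental principles of majorization recalled at the beginning of this section. Write $S:=(X^*AX+Y^*BY)\oplus (Y^*AY+X^*BX)$ and $T:=A\oplus B$, so that \eqref{eqmaj} reads $S\prec T$. Since $S$ is block diagonal, the functional calculus respects the blocks, giving $f(S)=f(X^*AX+Y^*BY)\oplus f(Y^*AY+X^*BX)$; thus the left-hand side of the claimed inequality is precisely $f(S)$, while the right-hand side is an average of unitary conjugates of $f(T)=f(A\oplus B)$. Hence the corollary amounts to producing unitaries $U_k\in\bM_{2n}$ with $f(S)\le \frac{1}{2n}\sum_{k=1}^{2n}U_kf(T)U_k^*$. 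Note that one cannot simply apply $f$ to the equality of Theorem \ref{th-orbit} via an operator Jensen inequality, since $f$ is only assumed convex, not operator convex; this is what forces the majorization route.

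The first step is to apply principle (1) to $S\prec T$: because $f$ is convex, this yields the weak majorization $f(S)\prec_w f(T)$. The spectra of $S$ and $T$ both lie in the interval on which $f$ is defined, since $S\prec T$ confines the eigenvalues of $S$ to the convex hull of those of $T=A\oplus B$, so $f(S)$ and $f(T)$ are well defined. The main obstacle is exactly here: principle (1) only delivers \emph{weak} majorization, whereas the averaging result \cite[Proposition 2.6]{BL2} invoked in Theorem \ref{th-orbit} requires a \emph{full} majorization in order to realize one matrix as an average of unitary orbits of another.

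To bridge this gap I would use the equivalence recorded earlier in the section, namely that $f(S)\prec_w f(T)$ holds if and only if $f(S)+C\prec f(T)$ for some $C\in\bM_{2n}^+$. Applying \cite[Proposition 2.6]{BL2} with $d=2n$ to the genuine majorization $f(S)+C\prec f(T)$ produces unitary matrices $U_k\in\bM_{2n}$ with $f(S)+C\le \frac{1}{2n}\sum_{k=1}^{2n}U_kf(T)U_k^*$, and discarding the positive term $C$ leaves $f(S)\le \frac{1}{2n}\sum_{k=1}^{2n}U_kf(T)U_k^*$. Rewriting $f(S)$ and $f(T)$ in their block forms gives exactly the assertion. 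This argument relies only on \eqref{eqmaj} together with the two majorization principles, and in particular never appeals to \cite[Corollary 2.4]{BL1}, as announced.
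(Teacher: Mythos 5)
Your proposal is correct and follows essentially the same route as the paper: pass from the majorization \eqref{eqmaj} to the weak majorization $f(S)\prec_w f(T)$ via principle (1), upgrade it to a full majorization $f(S)+C\prec f(T)$ with $C\in\bM_{2n}^+$, apply \cite[Proposition 2.6]{BL2}, and discard $C$. No substantive difference from the paper's argument.
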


\vskip 5pt
In particular,  for the absolute value, we note that
$$
\left|X^*AX+Y^*BY\right|\oplus \left|Y^*AY+X^*BX \right| \le\frac{1}{2n}\sum_{k=1}^{2n} U_k \left|A\oplus B\right|U_k^*.
$$

\vskip 5pt
\begin{proof} The majorization \eqref{eqmaj} and the basic principle of majorizations show
that
$$
f\left(X^*AX+Y^*BY\right)\oplus f\left(Y^*AY+X^*BX \right) \prec_w f(A\oplus B)
$$
for all convex functions defined on an interval containing the spectra of $A$ and $B$.
Thus
$$
f\left(X^*AX+Y^*BY\right)\oplus f\left(Y^*AY+X^*BX \right) +C \prec f(A\oplus B)
$$
for some positive semidefinite matrix $C\in\bM_{2n}^+$. Hence, as in the previous proof,
$$f\left(X^*AX+Y^*BY\right)\oplus f\left(Y^*AY+X^*BX \right) +C \le \frac{1}{2n}\sum_{k=1}^{2n} U_k f\left(A\oplus B\right)U_k^*
$$
for some family of unitary matrices $U_k\in\bM_{2n}$.
\end{proof}

\vskip 5pt
\begin{cor}\label{cordet}  Let $A, B\in\bM_n^+$.  If $X,Y\in\bM_n$ satisfy $X^*X+Y^*Y=XX^*+YY^*=I$, then,
$$
\det(X^*AX+Y^*BY)\det (Y^*AY+X^*BX) \ge \det A\det B.
$$
\end{cor}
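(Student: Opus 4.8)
The plan is to read the inequality straight off the majorization already secured in Theorem \ref{th-orbit}. Recall that in proving that theorem we established
$$
(X^*AX+Y^*BY)\oplus (Y^*AY+X^*BX) \prec A\oplus B .
$$
Write $S$ for the block matrix on the left and $T=A\oplus B$ on the right; both lie in $\bM_{2n}^+$ because each diagonal block $X^*AX+Y^*BY$ and $Y^*AY+X^*BX$ is a sum of positive semidefinite matrices. Since the determinant factorizes over a direct sum, $\det T=\det A\,\det B$ and $\det S=\det(X^*AX+Y^*BY)\,\det(Y^*AY+X^*BX)$, so the assertion of the corollary is nothing but $\det S\ge \det T$.

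Everything therefore reduces to the standard fact that, for positive semidefinite matrices, majorization reverses the determinant: $S\prec T\Rightarrow \det S\ge\det T$. I would deduce this from the basic principle (2) of majorization recalled in this section, applied to the convex function $g(t)=-\log t$ on $(0,\infty)$. Indeed ${\mathrm{Tr\,}}g(S)=-\log\det S$, so $S\prec T$ yields $-\log\det S\le -\log\det T$, that is $\det S\ge\det T$, which is exactly the claimed inequality.

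The only point needing care is the boundary behaviour, since $-\log t$ is undefined at $0$; this is the sole (mild) obstacle, and the substantive content is carried entirely by Theorem \ref{th-orbit}. If $A$ or $B$ is singular, then $\det A\,\det B=0$ while the left-hand side is a product of determinants of positive semidefinite matrices, hence nonnegative, and the inequality holds trivially. If $A$ and $B$ are positive definite, then $T=A\oplus B$ is positive definite, and the majorization $S\prec T$ forces $\lambda_{2n}^{\downarrow}(S)\ge \lambda_{2n}^{\downarrow}(T)>0$ (the partial-sum conditions for $k=2n-1$ together with equal traces give this bound on the smallest eigenvalue), so $S$ is positive definite as well and the logarithmic argument is legitimate. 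Alternatively, one may bypass this bookkeeping altogether by quoting the Schur-concavity of the elementary symmetric function $\prod_j\lambda_j$ on the nonnegative orthant, which gives $\det S\ge\det T$ for all positive semidefinite $S\prec T$ at once.
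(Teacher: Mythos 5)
Your proof is correct, but the final step differs from the paper's. The paper applies the full statement of Theorem \ref{th-orbit} --- the representation of $S:=(X^*AX+Y^*BY)\oplus(Y^*AY+X^*BX)$ as the average $\frac{1}{2n}\sum_k U_k(A\oplus B)U_k^*$ --- and concludes in one line from the concavity of the Minkowski functional $Z\mapsto\det^{1/2n}Z$ on $\bM_{2n}^+$:
$$
\det{}^{1/2n}S\ \ge\ \frac{1}{2n}\sum_{k=1}^{2n}\det{}^{1/2n}\bigl(U_k(A\oplus B)U_k^*\bigr)=\det{}^{1/2n}(A\oplus B),
$$
which needs no positivity caveat since $\det^{1/2n}$ is defined and concave on all of $\bM_{2n}^+$. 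You instead work from the majorization \eqref{eqmaj} together with the trace characterization of majorization applied to $g(t)=-\log t$; this is equally valid, and your treatment of the boundary issues is sound: the singular case is indeed trivial, and the bound $\lambda_{2n}^{\downarrow}(S)\ge\lambda_{2n}^{\downarrow}(A\oplus B)$ does follow from the partial-sum inequality at $k=2n-1$ combined with equality of traces. Your route is marginally more economical in that it uses only the majorization \eqref{eqmaj} rather than the unitary-average decomposition, at the cost of the $t=0$ bookkeeping that the Minkowski-functional argument avoids; your closing alternative via Schur-concavity of the product of eigenvalues removes that bookkeeping and is essentially the cleanest form of your argument.
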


\vskip 5pt
\begin{proof} Since the classical Minkowski functional $Z\mapsto \det^{1/2n} Z$ is concave on $\bM_{2n}^+$, the result is an immediate consequence of  Theorem \ref{th-orbit}.
 \end{proof}

The case $A=B$ and $X,Y$ are two orthogonal projections reads as the classical Fisher's inequality.

\vskip 5pt
Corollary \ref{cor-ext2} yields inequalities for symmetric norms and antinorms on $\bM_{2n}^+$.
Symmetric norms, $\|\cdot\|$, also called unitarily invariant norms, are classical objects in matrix analysis. We refer to \cite{Bh}, \cite{Hi}, \cite[Chapter 6]{HiP} and, in the setting of compact operators, \cite{Si}. The most famous examples are the Schatten $p$-norms, $1\le p\le \infty$, and the Ky Fan $k$-norms.

Symmetric anti-norms $\|\cdot\|_!$ are the concave counterpart of symmetric norms. Famous examples are the Schatten $q$-quasi norms, $0<q<1$ and the Minkowski functional considered in the proof of Corollary \ref{cordet}. We refer to \cite{BH1} and \cite[Section 4]{BH2} for much more examples.

\vskip 5pt
\begin{cor}\label{corfinal}  Let $f(t)$ and $g(t)$ be  two nonnegative functions defined on  $[a,b]$ and let $A,B\in\bM_n$ be Hermitian with spectra in $[a,b]$.  If $X,Y\in\bM_n$ satisfy $X^*X+Y^*Y=XX^*+YY^*=I$, then :
\begin{itemize}
\item[(i)] If $f(t)$ is concave, then, for all symmetric antinorms $\|\cdot\|_!$,
$$
 \left\| f\left(X^*AX+Y^*BY\right)\oplus f\left(Y^*AY+X^*BX \right) \right\|_!\ge \left\|  f\left(A\oplus B\right)\right\|_!.
$$
\item[(ii)] If $g(t)$ is convex, then, for all symmetric norms $\|\cdot\|$,
$$
\left\| g\left(X^*AX+Y^*BY\right)\oplus g\left(Y^*AY+X^*BX \right) \right\| 
\le\left\| g\left(A\oplus B\right) \right\| .
$$
\end{itemize} 
\end{cor}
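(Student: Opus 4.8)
The plan is to derive both parts from the single majorization \eqref{eqmaj},
$$M:=(X^*AX+Y^*BY)\oplus(Y^*AY+X^*BX)\prec A\oplus B=:N,$$
together with the monotonicity behaviour of symmetric norms and antinorms. First I would note that $M$ and $N=A\oplus B$ are block diagonal, so every scalar function acts blockwise; in particular $f(M)=f(X^*AX+Y^*BY)\oplus f(Y^*AY+X^*BX)$ and similarly for $g$, so that the two displayed inequalities are exactly comparisons of $f(M)$ with $f(N)$ and of $g(M)$ with $g(N)$. The hypotheses $f,g\ge 0$ ensure that $f(M),f(N),g(M),g(N)\in\bM_{2n}^+$, which is the cone on which antinorms are defined and on which symmetric norms are monotone.

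For part (ii) I would quote Corollary \ref{cor-ext2} for the convex function $g$, obtaining unitaries $U_k\in\bM_{2n}$ with $g(M)\le\frac{1}{2n}\sum_{k=1}^{2n}U_kg(N)U_k^*$. Applying any symmetric norm, using its monotonicity on $\bM_{2n}^+$ for the left-hand side and the triangle inequality together with unitary invariance for the right-hand side gives
$$\|g(M)\|\le\left\|\frac{1}{2n}\sum_{k=1}^{2n}U_kg(N)U_k^*\right\|\le\frac{1}{2n}\sum_{k=1}^{2n}\|g(N)\|=\|g(N)\|,$$
which is the asserted inequality. Equivalently, and more directly, $M\prec N$ yields $g(M)\prec_w g(N)$ by principle (1), and the conclusion follows from the monotonicity of symmetric norms under weak majorization.

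Part (i) is the delicate half and the main obstacle, since Corollary \ref{cor-ext2} is available only for convex functions and cannot be applied to the concave $f$. Instead I would apply principle (1) to the convex function $-f$: from $M\prec N$ it gives $-f(M)\prec_w-f(N)$, and rewriting $\lambda_j^\downarrow(-Z)=-\lambda_j^\uparrow(Z)$ (where $\lambda_j^\uparrow$ denotes the eigenvalues in nondecreasing order) turns this into the weak supermajorization $\sum_{j=1}^k\lambda_j^\uparrow(f(M))\ge\sum_{j=1}^k\lambda_j^\uparrow(f(N))$ for every $k$. Symmetric antinorms reverse this relation, that is $\|f(M)\|_!\ge\|f(N)\|_!$ whenever the smallest partial eigenvalue sums of $f(M)$ dominate those of $f(N)$ (see \cite{BH1}, \cite[Section 4]{BH2}), which is exactly the claim. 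The point requiring care is to track the reversal of order correctly---weak supermajorization, not submajorization---and to invoke antinorm monotonicity in the right direction; the nonnegativity of $f$ is what keeps $f(M),f(N)$ inside the positive cone where antinorms are defined.
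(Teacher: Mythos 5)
Your proposal is correct and follows essentially the same route as the paper: part (ii) is exactly the paper's argument (Corollary \ref{cor-ext2} plus monotonicity, unitary invariance and the triangle inequality of symmetric norms, or equivalently Ky Fan dominance applied to $g(M)\prec_w g(N)$), and part (i) rests on the same ingredients the paper uses, namely the majorization \eqref{eqmaj}, principle (1) applied to the convex function $-f$, and the superadditivity and unitary invariance of antinorms. The only cosmetic difference is that in (i) you invoke the dominance principle for antinorms under weak supermajorization (correctly, and with the right caution about the direction of the inequalities) where the paper instead appeals to the unstated ``concave version'' of Corollary \ref{cor-ext2}; both reductions are justified by the same unitary-orbit characterization of majorization.
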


\begin{proof} Since symmetric antinorms are unitarily invariant and superadditive, the first assertion follows from the version of Corollary \ref{cor-ext2} for concave version. The second assertion is an immediate consequence of  Corollary \ref{cor-ext2}. \end{proof}

We close this section by mentioning Moslehian's weak majorization which  provides a matrix version of the first inequality of Proposition \ref{propHH1}. We may restate \cite[Corollary 3.4]{Mo}
as inequalities
  for symmetric and antisymmetric norms.

\vskip 5pt
\begin{prop}\label{prop-HH-mox}  Let $f(t)$ and $g(t)$ be  two nonnegative functions defined on  $[a,b]$ and let $A,B\in\bM_n$ be Hermitian with spectra in $[a,b]$. 
\begin{itemize}
\item[(i)] If $f(t)$ is concave, then, for all symmetric antinorms $\|\cdot\|_!$,
$$
  \left\|f\left(\frac{A+B}{2}\right)\right\|_!
\ge
 \left\|\int_0^1 f((1-x)A+xB)\, {\mathrm{d}}x\right\|_!.
$$
\item[(ii)] If $g(t)$ is convex, then, for all symmetric norms $\|\cdot\|$,
$$
\left\|g\left(\frac{A+B}{2}\right)\right\|
\le\left\|\int_0^1 g((1-x)A+xB)\, {\mathrm{d}}x\right\| .
$$
\end{itemize}
\end{prop}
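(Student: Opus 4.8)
The two assertions are reformulations of a single weak majorization, so the first thing I would do is strip away the norms. Because $f$ and $g$ are nonnegative, all the matrices that occur are positive semidefinite, and on the positive cone every symmetric norm is nondecreasing for $\prec_w$ while every symmetric antinorm is nondecreasing for the dual relation (supermajorization of the tails). Hence (ii) reduces to the weak majorization
$$
g\left(\frac{A+B}{2}\right)\prec_w \int_0^1 g((1-x)A+xB)\,{\mathrm d}x ,
$$
and (i) reduces, for concave $f$, to the reverse statement comparing the $k$ smallest eigenvalues. Thus the whole proposition follows once this single majorization (and its concave dual) is in hand; this is exactly \cite[Corollary 3.4]{Mo}, and below I indicate how I would obtain it.

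Set $M_x:=(1-x)A+xB$; then $\int_0^1 M_x\,{\mathrm d}x=\frac{A+B}{2}$, so that, writing $\Phi(T):=\int_0^1 T(x)\,{\mathrm d}x$ for the averaging map on matrix-valued functions on $[0,1]$, we have $\Phi(M_\cdot)=\frac{A+B}{2}$ and $\Phi(g(M_\cdot))=\int_0^1 g(M_x)\,{\mathrm d}x$, and the desired relation reads $g(\Phi(M_\cdot))\prec_w\Phi(g(M_\cdot))$. Since $\Phi$ is unital and positive, this is the Jensen weak majorization for a unital positive map and an ordinary (not necessarily operator) convex function. I would establish it as a limit of its finite versions: on a uniform grid one has $g\big(\frac1N\sum_i M_{x_i}\big)\prec_w\frac1N\sum_i g(M_{x_i})$, whose two-term equal-weight instance is precisely the weak-majorization content of Theorem \ref{th-convex}. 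Indeed, the operator inequality there bounds $g\big(\frac{A+B}{2}\big)$ by an average of unitary conjugates of $\frac{g(A)+g(B)}{2}$, which is majorized by $\frac{g(A)+g(B)}{2}$, and combining the two gives $g\big(\frac{A+B}{2}\big)\prec_w\frac{g(A)+g(B)}{2}$. Letting $N\to\infty$ and using that $\prec_w$ is closed under limits together with the continuity of the functional calculus yields the integral form. The trace ($k=n$) endpoint is just Proposition \ref{propHH2}, a useful consistency check.

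For (i) I would apply the convex result to $-f$ and translate through $\lambda_j^\downarrow(-Z)=-\lambda_{n+1-j}^\downarrow(Z)$, turning $\prec_w$ into the supermajorization of the tails that antinorms respect. The hard part is the passage from two matrices to the continuum. One cannot integrate the two-term estimate pointwise at the level of norms: the bound $\big\|g(\frac{A+B}{2})\big\|\le\big\|\frac{g(M_x)+g(M_{1-x})}{2}\big\|$ holds for each $x$, but averaging only gives $\le\int_0^1\big\|\frac{g(M_x)+g(M_{1-x})}{2}\big\|\,{\mathrm d}x$, and $\int\|\cdot\|\ge\|\int\cdot\|$ runs the wrong way. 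Moreover weak majorization is not stable under summation, so the finite inequality for $N>2$ is not a formal consequence of the case $N=2$; it is the genuinely many-term Jensen weak majorization, in the spirit of the work of Aujla and Silva and equivalent here to \cite{Mo}, that must be invoked. Verifying the closure of $\prec_w$ under the Riemann limit and the precise antinorm monotonicity in (i) are the remaining, more routine, points.
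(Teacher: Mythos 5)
Your reduction of both assertions to the single weak majorization $g\left(\frac{A+B}{2}\right)\prec_w\int_0^1 g((1-x)A+xB)\,{\mathrm{d}}x$ (and its concave dual for the antinorm case), followed by the monotonicity of symmetric norms and antinorms under (super)weak majorization, is exactly how the paper treats this statement: Proposition \ref{prop-HH-mox} is presented purely as a restatement of \cite[Corollary 3.4]{Mo}, with no independent proof supplied. Your additional sketch is sound and even candid about the one nontrivial point --- that the many-term Jensen weak majorization is not obtained by summing two-term instances and must be invoked from the literature --- which matches the paper's own reliance on Moslehian's result.
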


\vskip 15pt
\noindent
Jean-Christophe Bourin

\noindent
Laboratoire de math\'ematiques, 

\noindent
Universit\'e de Bourgogne Franche-Comt\'e, 

\noindent
25 000 Besan\c{c}on, France.

\noindent
Email: jcbourin@univ-fcomte.fr

  \vskip 10pt
\noindent
Eun-Young Lee

\noindent Department of mathematics, KNU-Center for Nonlinear
Dynamics,

\noindent
Kyungpook National University,

\noindent
 Daegu 702-701, Korea.

\noindent Email: eylee89@knu.ac.kr

\end{document}